\newcommand{\real}{\mathbb{R}}
\newcommand{\nat}{\mathbb{N}}
\newcommand{\rn}{\real^N}
\newcommand{\intrn}{\int_{\real^N}}
\newcommand{\N}{\mathbb{N}}
\newcommand{\R}{\mathbb{R}}
\newcommand{\ffi}{\varphi}
\newcommand{\D}{\Delta}
\newcommand{\lam}{\lambda}
\newcommand{\wto}{\rightharpoonup}
\newcommand{\wt}{\widetilde}
\newcommand{\diff}{\,\mathrm{d}}
\newcommand{\x}{\times}
\newcommand{\nld}[1]{\|#1\|_{L^2}}
\newcommand{\nhu}[1]{\|#1\|_{H^1}}
\let\le=\leqslant
\let\ge= \geqslant
\newcommand{\disp}{\displaystyle}
\newcommand\txt{\textstyle}
\DeclareMathOperator \im{Im}
\DeclareMathOperator \re{Re}
\DeclareMathOperator \supp{supp}
\theoremstyle{plain}
\newtheorem{theorem}{Theorem}
\newtheorem{lemma}[theorem]{Lemma}
\newtheorem{proposition}[theorem]{Proposition}
\theoremstyle{definition}
\newtheorem{remark}[theorem]{Remark}
\newtheorem*{notation}{Notation}
\numberwithin{equation}{section}
\begin{document}

\title[Blow-up solutions of an inhomogeneous NLS]
{Classification of minimal mass blow-up solutions
for an $L^2$ critical inhomogeneous NLS}

\author[V. Combet]{Vianney Combet}
\address{U.F.R. de Math\'ematiques, Universit\'e Lille 1, 59655 Villeneuve d'Ascq, France}
\email{vianney.combet@math.univ-lille1.fr}

\author[F. Genoud]{Fran\c cois Genoud}
\address{Delft Institute of Applied Mathematics \\
Delft University of Technology\\
Mekelweg~4\\
2628CD Delft, The Netherlands}
\email{S.F.Genoud@tudelft.nl}

\subjclass[2010]{35Q55 ; 35B44 ; 35C06}

\keywords{Inhomogeneous NLS, $L^2$ critical, blow-up, self-similar, critical mass}

\thanks{This work is supported by the Labex CEMPI (ANR-11-LABX-0007-01).
FG is grateful to the Labex team, and in particular to its director, Prof.~Stephan De Bi\`evre,
for their warm hospitality at Universit\'e Lille 1, where the present research was initiated.}

\begin{abstract}
We establish the classification of minimal mass blow-up solutions of the $L^2$
critical inhomogeneous nonlinear Schr\"odinger equation
\[
i\partial_t u + \Delta u + |x|^{-b}|u|^{\frac{4-2b}{N}}u = 0,
\]
thereby extending the celebrated result of Merle~\cite{m93} from the classic case $b=0$
to the case $0<b<\min\{2,N\}$, in any dimension $N\ge1$.
\end{abstract}

\maketitle


\section{Introduction}

In this paper we establish the classification of  minimal mass blow-up solutions of the
inhomogeneous nonlinear Schr\"odinger equation
\begin{equation}\label{inls}
i\partial_t u + \Delta u + |x|^{-b}|u|^{p-1}u = 0, \quad
u(0,\cdot)=u_0\in H^1(\rn),
\end{equation}
in the case $p=1+\frac{4-2b}{N}$, with $0<b<\min\{2,N\}$ and any $N\ge1$,
where the equation is $L^2$ critical, as pointed out in~\cite{g12}.
The case $b=0$ is the classic focusing NLS equation with $L^2$ critical
nonlinearity. The physical relevance of~\eqref{inls} with $b>0$ may not appear obvious
due to the singularity at $x=0$. However, this model problem plays an important role
as a limiting equation in the analysis of more general inhomogeneous problems
of the form
\[
 i\partial_t u + \Delta u + V(x)|u|^{p-1}u = 0
\]
with $V(x)\sim|x|^{-b}$ as $|x|\to\infty$, which are ubiquitous in
nonlinear optics --- see \cite{g08,g10,gs} for more details.

We consider here strong solutions $u=u(t,x)\in C^0_t H^1_x ([0,T)\x\rn)$,
where $T>0$ is the maximum time of existence of $u$.
We may simply denote by $u(t)\in H^1(\rn)$ the function $x\mapsto u(t,x)$.
The solution is called global if $T=+\infty$.
If it is not the case, the blow-up alternative
states that $\Vert u(t)\Vert_{H^1}\to\infty$ as $t\uparrow T$.
Moreover, along the flow of~\eqref{inls},
we have conservation of the $L^2$ norm, also known as the \emph{mass}:
\[
\Vert u(t)\Vert_{L^2_x}=\Vert u_0\Vert_{L^2_x},
\]
and of the \emph{energy}:
\begin{equation}\label{energy}
E(u(t))=\frac12 \intrn|\nabla u(t)|^2\diff x - \frac{1}{p+1}\intrn|x|^{-b}|u(t)|^{p+1}\diff x = E(u_0).
\end{equation}
We refer to the discussion in~\cite{g12} regarding the well-posedness theory of~\eqref{inls}
in $H^1(\rn)$. The theory is similar to the classic case $b=0$: there is
local well-posedness --- {\em i.e.} existence and uniqueness of solutions for small positive times ---
(and global for small initial data) in $H^1(\rn)$ if $1<p<1+\frac{4-2b}{N-2}$
($1<p<\infty$ if $N=1,2$); there is global well-posedness for any initial data in $H^1(\rn)$,	
provided $1<p<1+\frac{4-2b}{N}$. We are here interested in the critical case
$p=1+\frac{4-2b}{N}$.

The above invariants are related to the symmetries of~\eqref{inls} in $H^1(\rn)$.
More precisely, if $u(t,x)$ solves~\eqref{inls}, then so do:
\begin{enumerate}[label=(\alph*)]
\item $u_{t_0}(t,x)=u(t-t_0,x)$, for all $t_0\in\R$ (time translation invariance);
\item $u_{\gamma_0}(t,x)=e^{i\gamma_0}u(t,x)$, for all $\gamma_0\in\R$ (phase invariance);
\item $u_{\lambda_0}(t,x)=\lambda_0^{(2-b)/(p-1)}u(\lambda_0^2t,\lambda_0x)$, for all $\lambda_0>0$ (scaling invariance).
\end{enumerate}
Note that, unlike the classic case $b=0$,
\eqref{inls} with $b>0$ is not invariant
under space translations and Galilean transformations.

The symmetries (a) and (b) are obvious and give rise, via Noether's theorem,
to the invariance of the energy and the mass, respectively. However, it
is remarkable that~\eqref{inls} indeed has the scaling symmetry (c). In the case
$p=1+\frac{4-2b}{N}$ which will be our focus here, we have $(2-b)/(p-1)=N/2$,
and so
\[
\Vert u_{\lambda_0}(t)\Vert_{L^2_x}=\Vert u(t)\Vert_{L^2_x}, \quad\text{for all } \lambda_0>0.
\]
The symmetry (c) is then called the $L^2$ {\em scaling}, and~\eqref{inls} is said to be
$L^2$ {\em critical}.

An important feature of~\eqref{inls} is the existence of standing wave solutions.
Indeed, $u(t,x)=e^{it}\ffi(x)$ is a (global) solution of~\eqref{inls} if and only if
$\ffi\in H^1(\rn)$ solves the nonlinear elliptic equation
\begin{equation}\label{sinls}
\Delta\ffi - \ffi + |x|^{-b}|\ffi|^{\frac{4-2b}{N}}\ffi=0.
\end{equation}
There exists a unique positive and radial solution of~\eqref{sinls},
called the {\em ground state}, which we will denote by $\psi$ throughout the paper.
We refer the reader to~\cite{g12} for references about
the existence and uniqueness theory for~\eqref{sinls}.

It turns out that the ground state is a fundamental object to understand the dynamics
of~\eqref{inls}. Theorem~2.5 of~\cite{g12}
shows, for instance, that the solutions of~\eqref{inls} are global provided
\[
\Vert u_0\Vert_{L^2}<\Vert\psi\Vert_{L^2}.
\]
A crucial inequality for the proof of this theorem, which can be deduced
from Proposition~2.2 of~\cite{g12}, is
\begin{equation}\label{posE}
E(u)\ge \frac12\Vert\nabla u\Vert_{L^2}^2
\left(1-\left(\frac{\Vert u\Vert_{L^2}}{\Vert \psi\Vert_{L^2}}\right)^\frac{4-2b}{N}\right),
\quad \text{for all }\,u\in H^1(\rn).
\end{equation}
Indeed, since the $L^2$ norm and the energy are conserved,~\eqref{posE} immediately yields
an \emph{a priori} bound on $\nld{\nabla u(t)}$ in the case $\nld{u_0}<\nld{\psi}$, namely
\begin{equation} \label{subcritical}
\nld{\nabla u(t)}^2 \le 2E(u_0) \left(1-\left(\frac{\nld{u_0}}{\nld{\psi}}\right)^\frac{4-2b}{N}\right)^{-1},
\end{equation}
which implies global existence. Another interesting consequence of Proposition~2.2 of~\cite{g12}
is that $E(\psi)=0$.
Therefore, $\psi$ lies on the submanifold of $H^1(\rn)$ defined by the intersection of two
constraints, $\|u\|_{L^2}=\Vert\psi\Vert_{L^2}$ and $E(u)=0$. This manifold will be characterized in
Proposition~\ref{charact.prop} below.

On the other hand, it follows from Theorem~3.1 of~\cite{g12} that there exists a solution of~\eqref{inls}
with $\Vert u_0\Vert_{L^2}=\Vert\psi\Vert_{L^2}$ which blows up in finite time. That is,
$\Vert \psi\Vert_{L^2}$ is the minimal mass for blow-up solutions of~\eqref{inls},
which will henceforth be referred to as the {\em critical mass} for~\eqref{inls}.
Note that the proof of Theorem~3.1 of~\cite{g12} relies mainly on the pseudo-conformal
transformation applied to the standing wave $e^{it}\psi$,
and if we take into account the three invariances of~\eqref{inls}
described above, we obtain a 3-parameter
family $(S_{T,\lambda_0,\gamma_0})_{T\in\R,\lambda_0>0,\gamma_0\in\R}$ of critical mass solutions
of~\eqref{inls} which blow up in finite time, defined by
\begin{equation} \label{S1.def}
S_{T,\lambda_0,\gamma_0}(t,x) = e^{i\gamma_0} e^{i\frac{\lambda_0^2}{T-t}} e^{-i\frac{|x|^2}{4(T-t)}}
\left(\frac{\lambda_0}{T-t}\right)^{N/2}\psi\left( \frac{\lambda_0x}{T-t} \right).
\end{equation}
Note that these solutions present a self-similar profile, in the sense that, for all $t\in [0,T)$, there
exists $\lam(t)>0$ such that $|S_{T,\lambda_0,\gamma_0}(t,x)|=\lam(t)^{N/2}\psi(\lam(t)x)$. Hence,
up to a time-dependent $L^2$ rescaling,
$S_{T,\lambda_0,\gamma_0}$ keeps the same shape as $\psi$ while blowing up.
We refer to Section~\ref{pseudo-conf.sec} for more details and comments
about the pseudo-conformal transformation and the construction of these critical mass solutions.

We now state our main result.
\begin{theorem} \label{main.thm}
Let $u$ be a critical mass solution of~\eqref{inls} which blows up in finite time,
\emph{i.e.}~$\nld{u_0}=\nld{\psi}$ and there exists $T>0$ such that $\disp \lim_{t\uparrow T} \nld{\nabla u(t)}=+\infty$.

Then there exist $\lambda_0>0$ and $\gamma_0\in\R$ such that, for all $t\in [0,T)$,
\[
u(t)=S_{T,\lambda_0,\gamma_0}(t),
\]
where $S_{T,\lambda_0,\gamma_0}$ is defined in~\eqref{S1.def}.
\end{theorem}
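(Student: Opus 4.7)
The plan is to follow Merle's original strategy for the $b=0$ case, suitably adapted to the absence of space translations and Galilean invariance in the inhomogeneous equation. The inhomogeneity actually simplifies one aspect of the argument (the symmetry group is smaller, so translated ground states need not be accounted for), but the singular weight $|x|^{-b}$ introduces its own difficulties, especially near the origin.

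First, I would apply the pseudo-conformal transformation to the blow-up solution $u$, obtaining a global-in-forward-time solution $v$ of the same equation (the transformation is already set up in Section~\ref{pseudo-conf.sec}). By construction, $\nld{v(s)}=\nld{u(t)}=\nld{\psi}$, so $v$ is itself of critical mass. A direct computation relates $E(v(s))$ to a virial/second-moment functional of $u(t)$, and the blow-up hypothesis $\nld{\nabla u(t)}\to\infty$ translates, via the pseudo-conformal identity, into quantitative control on $E(v(s))$ and $\nld{\nabla v(s)}$ as $s\to\infty$. Combined with~\eqref{posE} applied to $v$, this should force $v(s)$ to approach, asymptotically in $s$, the extremal manifold
\[
\mathcal{M}=\{w\in H^1(\rn):\nld{w}=\nld{\psi},\ E(w)=0\}
\]
on which~\eqref{posE} is saturated.

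Second, I would invoke the variational characterization of $\mathcal{M}$ stated in Proposition~\ref{charact.prop}, which identifies every element of $\mathcal{M}$ with $e^{i\gamma}\lambda^{N/2}\psi(\lambda\,\cdot)$ for some $\gamma\in\R$ and $\lambda>0$. A concentration-compactness/rigidity step then has to upgrade the asymptotic information on $v$ to a pointwise-in-$s$ conclusion: that $v(s)$ lies on $\mathcal{M}$ for \emph{every} $s$, and hence is of the above form with time-dependent $(\gamma(s),\lambda(s))$. Substituting this ansatz into the equation for $v$ forces $\gamma$ and $\lambda$ to satisfy explicit ODEs integrable in closed form, and inverting the pseudo-conformal transformation then yields $u=S_{T,\lambda_0,\gamma_0}$ for parameters $\lambda_0,\gamma_0$ determined by the initial data.

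The main obstacle will be the rigidity step, i.e., converting the asymptotic statement "$v(s)$ saturates~\eqref{posE} in the limit $s\to\infty$" into the strong statement that $v(s)$ lies on $\mathcal{M}$ for all $s$. In the classical case $b=0$, this proceeds by a concentration-compactness argument modulo translations, combined with a unique-continuation or exponential-decay estimate on the limiting profile. Here, while translations disappear from the symmetry group, the weight $|x|^{-b}$ complicates the compactness analysis: one must rule out both anomalous concentration at the singularity and loss of mass to spatial infinity, and one must show that any weak-$H^1$ limit of a critical-mass sequence with vanishing energy is a true minimizer of the Gagliardo–Nirenberg–type inequality underlying~\eqref{posE}, namely (up to phase and scaling) the ground state $\psi$ itself. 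Once this compactness/rigidity step is in place, the remainder is a clean computation of the pseudo-conformal inverse.
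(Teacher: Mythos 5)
There is a genuine gap, and it sits at the very first step of your plan. You propose to ``apply the pseudo-conformal transformation to the blow-up solution $u$'' as the opening move. But the pseudo-conformal transformation involves multiplication by $e^{-i|x|^2/4(T-t)}$, and the resulting function belongs to $H^1(\rn)$ only if $xu(t)\in L^2(\rn)$, i.e.\ only if $u(t)\in\Sigma$. The hypothesis of Theorem~\ref{main.thm} places $u_0$ merely in $H^1(\rn)$, and establishing the finite-variance property $u(t)\in\Sigma$ is in fact one of the central difficulties of the proof, not a given. In the paper this is Step~3, and it requires the full strength of the preceding machinery: the $H^1$ compactness of the rescaled flow (Proposition~\ref{compact.prop} applied to $v_n=\lambda_n^{-N/2}u(t_n)(\lambda_n^{-1}\cdot)$), the resulting concentration of all the mass at $x=0$, and Banica's refined Cauchy--Schwarz inequality (Lemma~\ref{Banica.lem}) applied to a \emph{localized} virial quantity $\Gamma_R$ to get the $R$-uniform bound $\Gamma_R(t)\le C(T-t)^2$. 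Your plan presupposes exactly what has to be proved, and none of the subsequent steps you describe can supply it.

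The second issue is that the ``rigidity step'' you single out as the main obstacle is left entirely open, and the mechanism you sketch (asymptotic saturation of~\eqref{posE}, then a concentration-compactness/unique-continuation upgrade to a pointwise-in-time statement, then ODEs for $(\gamma(s),\lambda(s))$) is not how the argument closes --- nor is it clear it could be made to work as stated. Once $u(t)\in\Sigma$ is known, no asymptotic-to-pointwise upgrade is needed: the $L^2$-critical virial law~\eqref{GammaSeconde}, $\Gamma''(t)=16E(u_0)$, together with $\Gamma(T)=\Gamma'(T)=0$ (which follow from the mass concentration at the origin), forces $\Gamma(t)=8E(u_0)(T-t)^2$ exactly, hence exact values of $\Gamma(0)$ and $\Gamma'(0)$. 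Feeding these into the quadratic expansion~\eqref{Banica} with $\theta(x)=|x|^2/2$ and $s=1/2T$ yields $E\bigl(u_0e^{i|x|^2/4T}\bigr)=0$ \emph{exactly}, so Proposition~\ref{charact.prop} applies directly to the modified initial datum, and uniqueness of the Cauchy problem for~\eqref{inls} finishes the proof. Your instinct to use the variational characterization of the zero-energy critical-mass manifold is correct, and your remarks on the compactness analysis (no translations to quotient by, but the weight $|x|^{-b}$ forcing concentration at the origin rather than at a drifting point) do match Steps~1--2 of the paper. But as written, the proposal is missing the two ideas that actually carry the proof: the localized virial/Banica argument that puts $u(t)$ in $\Sigma$, and the exact (not asymptotic) virial computation that transfers the variational characterization to $u_0$ itself.
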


It is worth remarking here that, since the space translation invariance of~\eqref{inls}
is broken for $b>0$, our conclusion is stronger than in the case $b=0$,
which is reflected in the absence of space translation and Galilean symmetries in~\eqref{S1.def}.
In addition, it transpires from our proof (see Step 2 in Section~\ref{proof.sec}) that all of the
solution mass concentrates at the origin in $\rn$ as the blow-up occurs.

Blow-up solutions of the $L^2$ critical NLS in
the classic case $b=0$ have been thoroughly investigated since the seminal works
of Weinstein~\cite{w82,w86}. In fact, Theorem~2.5 of~\cite{g12} extends a result of
Weinstein~\cite{w82} from the case $b=0$ to the case $0<b<\min\{2,N\}$,
and Theorem~\ref{main.thm} above extends the classification result of Merle~\cite{m93}
to the case $0<b<\min\{2,N\}$.
A comprehensive review of the theory of blow-up solutions for the classic focusing NLS can be found
in~\cite{r}, where a proof of Merle's result~\cite[Theorem~4.1]{r} is presented, which is based on more
recent arguments --- notably a refined Cauchy-Schwarz inequality due to Banica~\cite{b}.

Although more scarcely, critical mass blow-up solutions have also been investigated in the context
of inhomogeneous NLS equations by several authors. For instance,
\begin{equation}\label{k}
i\partial_t u +\D u +k(x)|u|^{4/N}u=0
\end{equation}
was considered by Merle~\cite{m96}, and later by Rapha\"el and Szeftel~\cite{rs} (in the case $N=2$),
where the inhomogeneity $k$ is supposed to be smooth, positive and bounded.
Merle~\cite{m96} derived conditions on $k$ for the localization of the concentration
point of critical mass blow-up solutions, and for the non-existence of
critical mass blow-up solutions. Rapha\"el and Szeftel~\cite{rs} proved the existence
and the classification of critical mass blow-up solutions for~\eqref{k}, provided $k$ attains
its maximum in $\rn$.
Banica, Carles and Duyckaerts~\cite{bcd} studied the problem
\[
 i\partial_t u +\D u-V(x)u +g(x)|u|^{4/N}u=0,
\]
where $V$ and $g$ satisfy strong smoothness assumptions.
Assuming that $g$ is sufficiently flat at the origin, they proved
the existence of critical mass blow-up solutions by adapting a fixed point argument
developed by Bourgain and Wang~\cite{bw} in the classic case of~\eqref{inls} with $b=0$.

It is worth noting here that problem~\eqref{inls} does not fall within the scope of \cite{m96,rs,bcd}
due to the singularity at $x=0$. Moreover, our approach strongly benefits from
the scaling properties of~\eqref{inls} --- notably the pseudo-conformal invariance, which is
not present in \cite{m96,rs,bcd}.

Our proof of Theorem~\ref{main.thm} follows the scheme outlined in~\cite{r}.
In Sections~\ref{charact.sec} and~\ref{compact.sec}, respectively,
we prove a variational characterization of the ground state of~\eqref{sinls}
and a compactness property of the flow in $H^1(\rn)$.
In Section~\ref{virial.sec} we extend the classic virial identities to the inhomogeneous
case, $b>0$. In Section~\ref{pseudo-conf.sec} we
show that~\eqref{inls} is invariant under the pseudo-conformal transformation.
Combining all these ingredients,
we give the proof of Theorem~\ref{main.thm} in Section~\ref{proof.sec}.

\begin{notation}
To avoid cumbersome exponents and indices, without further notice
we let $p=1+\frac{4-2b}{N}$ throughout the paper.
We also let $2^* = \frac{2N}{N-2}$ if $N\ge3$ and $2^*=\infty$ if $N=1,2$.
We will often denote the Lebesgue norms
$\Vert\cdot\Vert_{L^q}$ merely by $\Vert\cdot\Vert_q$, for $1\le q\le\infty$.
All the integrals will be understood to be over $\rn$, even when not specified.
For $x,y\in\rn$, we denote $x\cdot y$ their inner product, and $|x|=\sqrt{x\cdot x}$
the Euclidean norm of $x$.
The symbol $C$ will denote various positive constants, the exact value of
which is not essential to the analysis.
\end{notation}


\section{Variational characterization of the ground state} \label{charact.sec}

We start by proving the following key proposition, which gives a variational characterization of the
ground state of~\eqref{sinls}.

\begin{proposition}\label{charact.prop}
Let $v\in H^1(\rn)$ be such that
\begin{equation}\label{charactcond}
\Vert v\Vert_{L^2}=\Vert \psi\Vert_{L^2} \quad\text{and}\quad
E(v)=0.
\end{equation}
Then there exist $\lambda_0>0$ and $\gamma_0\in\real$ such that
$v(x)=e^{i\gamma_0}\lambda_0^{N/2}\psi(\lambda_0x)$.
\end{proposition}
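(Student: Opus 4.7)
The plan is to recognise $v$ as an extremal of the sharp Gagliardo--Nirenberg-type inequality behind~\eqref{posE} and then to classify such extremals using the uniqueness of the positive radial ground state $\psi$. Concretely,~\eqref{posE} rearranges as
\[
\int|x|^{-b}|u|^{p+1}\diff x \le \frac{p+1}{2}\|\nabla u\|_{L^2}^2
\left(\frac{\|u\|_{L^2}}{\|\psi\|_{L^2}}\right)^{(4-2b)/N},\quad u\in H^1(\rn),
\]
with equality at $u=\psi$ since $E(\psi)=0$. The hypotheses~\eqref{charactcond} mean exactly that $v$ also saturates this inequality, i.e.\ $v$ minimises the Weinstein-type functional $J(u):=\|\nabla u\|_{L^2}^2\|u\|_{L^2}^{(4-2b)/N}\big/\int|x|^{-b}|u|^{p+1}\diff x$ on $H^1(\rn)\sm\{0\}$.

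Next I reduce to a real non-negative extremal. Because $J$ depends on $v$ only through $|v|$ and $\|\nabla v\|_{L^2}$, and because $\bigl|\nabla|v|\bigr|\le|\nabla v|$ a.e., the function $w:=|v|\ge 0$ is also a minimiser, and equality $\bigl|\nabla|v|\bigr|=|\nabla v|$ holds a.e.\ on $\rn$. As a minimiser, $w$ satisfies the Euler--Lagrange equation $-\D w+\mu_1 w=\mu_2|x|^{-b}w^p$ for some positive constants $\mu_1,\mu_2$, and these two Lagrange multipliers can be absorbed by the $L^2$-critical scaling $w_\lambda(x)=\lambda^{N/2}w(\lambda x)$: for a suitable $\lambda_0>0$, $w_{\lambda_0}$ solves~\eqref{sinls}. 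Radiality of $w$ is obtained by Schwarz symmetrisation, which does not increase $\|\nabla w\|_{L^2}$ and, since $|x|^{-b}$ is radially decreasing, does not decrease $\int|x|^{-b}w^{p+1}\diff x$; hence $w$ coincides with its symmetric-decreasing rearrangement. Uniqueness of the positive radial $H^1$ solution of~\eqref{sinls}, recalled in the introduction, then yields $w_{\lambda_0}=\psi$, so $|v|(x)=\lambda_0^{N/2}\psi(\lambda_0 x)>0$ everywhere.

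It remains to determine the phase of $v$. Writing $v=v_1+iv_2$, the pointwise equality $\bigl|\nabla|v|\bigr|=|\nabla v|$ translates into $\im(\bar v\,\nabla v)=0$ a.e.\ on $\rn$, which, together with $|v|>0$ everywhere, forces $v/|v|$ to be a constant unimodular complex number $e^{i\gamma_0}$ with $\gamma_0\in\real$. This gives $v(x)=e^{i\gamma_0}\lambda_0^{N/2}\psi(\lambda_0 x)$, as desired.

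The main obstacle is the middle paragraph: writing down the Euler--Lagrange equation for $J$ cleanly, choosing the $L^2$-critical rescaling so that both Lagrange multipliers become $1$, and justifying the radiality reduction so that the uniqueness statement from~\cite{g12} applies. The $L^2$-criticality of the exponent $p=1+(4-2b)/N$ is crucial here: the scaling $w\mapsto w_\lambda$ preserves $\|w\|_{L^2}$, in line with the one-parameter freedom $\lambda_0>0$ in the conclusion (and the absence of space-translation freedom, broken by the weight $|x|^{-b}$).
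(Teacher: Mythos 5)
Your overall strategy is exactly the paper's: identify $v$ as a minimiser of the Weinstein functional $J$, pass to $|v|$, establish radiality by rearrangement, invoke the Euler--Lagrange equation together with uniqueness of the positive radial solution of~\eqref{sinls}, and finally show the phase is constant via the equality case of $\bigl\Vert\nabla(|v|)\bigr\Vert_2\le\Vert\nabla v\Vert_2$. The phase argument and the scaling normalisation of the Lagrange multipliers are fine and match the paper.

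The genuine gap is in your radiality step. From ``symmetrisation does not increase $\Vert\nabla w\Vert_{L^2}$ and does not decrease $\int|x|^{-b}w^{p+1}$'' you conclude that $w$ \emph{coincides} with its symmetric-decreasing rearrangement $w^*$; this is a non sequitur. Those two non-strict inequalities only give $J(w^*)\le J(w)=\min J$, i.e.\ that $w^*$ is \emph{another} minimiser --- they say nothing about $w$ itself, and saturating a pair of non-strict inequalities never identifies $w$ with $w^*$ without a characterisation of the equality cases. The missing input, which the paper supplies by citing Theorem~6.1 of Hajaiej~\cite{haj}, is the \emph{strict} extended Hardy--Littlewood inequality: if $w>0$ is not radial, then
\[
\intrn|x|^{-b}(w^*)^{p+1}\diff x>\intrn|x|^{-b}w^{p+1}\diff x,
\]
which combined with $\Vert\nabla w^*\Vert_2\le\Vert\nabla w\Vert_2$ yields $J(w^*)<J(w)$ and contradicts minimality. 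Note that the standard P\'olya--Szeg\H{o} equality case is not a substitute here (it is delicate and, in the translation-invariant setting, only gives radiality up to translation anyway); it is the strictly radially decreasing weight $|x|^{-b}$ that pins the symmetrisation down, and you need the strict version of that rearrangement inequality to close the argument. You flag this step as an ``obstacle'' but do not resolve it, so as written the proof is incomplete at precisely the point where the paper's proof does real work.
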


\begin{proof}
It follows from Proposition~2.2 of~\cite{g12} that
the ground state $\psi$ of~\eqref{sinls} is a minimizer of the Weinstein
functional
\[
J(u)=\frac{\Vert \nabla u\Vert_2^2\Vert u\Vert_2^{p-1}}{\intrn |x|^{-b}|u|^{p+1}\diff x},
\]
and that $E(\psi)=0$. Therefore, for any $v\in H^1(\rn)$ satisfying~\eqref{charactcond} we have
$J(v)=J(\psi)$, so that $v$ is a minimizer of $J$. But then $|v|$ is also a minimizer, since
\begin{equation}\label{gradients}
 \Vert \nabla(|v|)\Vert_2\le \Vert \nabla v\Vert_2.
\end{equation}
Furthermore, any positive minimizer is radial thanks to a result of Hajaiej~\cite{haj}.
Indeed, suppose $v_0$ is a positive minimizer that is not radial,
and consider its Schwarz symmetrization $v_0^*$. Then Theorem~6.1 of~\cite{haj} implies that
\[
\intrn |x|^{-b}|v_0^*|^{p+1}\diff x>\intrn |x|^{-b}|v_0|^{p+1}\diff x.
\]
Since, on the other hand,
\[
\Vert \nabla v_0^*\Vert_2\le \Vert \nabla v_0\Vert_2
\quad\text{and}\quad
\Vert v_0^*\Vert_2\le\Vert v_0\Vert_2
\]
by standard properties of the Schwarz symmetrization, we get $J(v_0^*)<J(v_0)$,
a contradiction. We deduce that $|v|$ is radial. Furthermore,
the Euler-Lagrange equation expressing the fact that $|v|$ is a minimizer reads
\[
\Delta(|v|)
-\txt\left(\frac{p-1}{2}\right)\frac{\Vert\nabla(|v|)\Vert_2^2}{\Vert v\Vert_2^2}|v|
+|x|^{-b}|v|^p=0.
\]
It now follows by the scaling properties of this elliptic equation, and by
the uniqueness of its positive radial solution (see the discussion in~\cite{g12}),
that
\[
|v(x)|=\lambda_0^{N/2}\psi(\lambda_0x),
\quad\text{with} \quad \lam_0=\sqrt{\frac{p-1}{2}}\frac{\Vert\nabla(|v|)\Vert_2}{\Vert v\Vert_2}.
\]
It only remains to show that $w$ defined by $w(x)=\frac{v(x)}{|v(x)|}$ is constant on $\R^N$. To do this,
first observe that differentiating $|w|^2\equiv 1$ leads to $\re(\bar w\nabla w)\equiv 0$, and so
\[
|\nabla v|^2 =|\nabla(|v|)|^2+|v|^2|\nabla w|^2 +2|v|\nabla(|v|)\cdot\re(\bar w\nabla w)
\]
then gives
\[
 \Vert \nabla v\Vert_2^2=\Vert \nabla(|v|)\Vert_2^2+\intrn|v|^2|\nabla w|^2\diff x.
\]
Now supposing $|\nabla w|\not\equiv0$ on $\rn$, we would have
strict inequality in~\eqref{gradients}, and hence $J(|v|)<J(v)$. This contradiction shows that, indeed,
$|\nabla w|\equiv0$ on $\rn$. Hence, $w$ is constant on $\rn$, and since its modulus is $1$, we deduce
that there exists $\gamma_0\in\R$ such that $w\equiv e^{i\gamma_0}$, which completes the proof.
\end{proof}


\section{Compactness}\label{compact.sec}

The main goal of this section is to prove Proposition~\ref{compact.prop} below.
To do so, we first need some inhomogeneous estimates, reproduced in the following lemma
for the reader's convenience. The proof can be found in~\cite[Appendix~A]{g10} (for $N=1$)
and~\cite[Appendix~A]{g08} (for $N\ge2$).

\begin{lemma}\label{est.lem}
Let $0<b<\min\{2,N\}$ and $1<p<1+\frac{4-2b}{N-2}$ if $N\ge3$,
$1<p<\infty$ if $N=1,2$. Then there is a constant $C=C(N,b,p)>0$ such that
\begin{multline*}
\intrn |x|^{-b}\bigl||u|^{p-1}-|v|^{p-1}\bigr||\ffi||\xi| \diff x
    \le
    C\bigl\{\left\| |u|^{p-1}-|v|^{p-1} \right\|_{L^\beta}\|\ffi\|_{L^\gamma}\|\xi\|_{L^\gamma}\\
           +\left\| |u|^{p-1}-|v|^{p-1} \right\|_{L^\sigma}\|\ffi\|_{L^{p+1}}\|\xi\|_{L^{p+1}}\bigr\}
\end{multline*}
for all $u,v,\ffi,\xi\in H^1(\rn)$, where
\[
(p-1)\beta=\gamma\in\txt(\frac{N(p+1)}{N-b},2^*)
\quad\text{and}\quad (p-1)\sigma=p+1.
\]
\end{lemma}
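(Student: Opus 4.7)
The natural plan is to split the integral according to the behaviour of the weight $|x|^{-b}$: let $B$ denote the unit ball centred at the origin, so that $|x|^{-b}\le 1$ on $\rn\setminus B$ while $|x|^{-b}$ is locally in $L^r(B)$ provided $br<N$. On the exterior piece, I would simply drop the weight and apply Hölder's inequality with three factors and exponents $(\sigma,p+1,p+1)$. The relation $(p-1)\sigma=p+1$ forces $\tfrac1\sigma+\tfrac{2}{p+1}=1$, so this step is legal and produces exactly the second term of the claimed inequality.

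On the interior piece I would apply Hölder with four factors, isolating the weight in its own $L^r(B)$ norm:
\[
\int_B |x|^{-b}\bigl||u|^{p-1}-|v|^{p-1}\bigr||\ffi||\xi|\diff x\le\bigl\||x|^{-b}\bigr\|_{L^r(B)}\bigl\|\,|u|^{p-1}-|v|^{p-1}\bigr\|_{L^\beta}\|\ffi\|_{L^\gamma}\|\xi\|_{L^\gamma},
\]
with exponents obeying $\tfrac1r+\tfrac1\beta+\tfrac2\gamma=1$. Substituting $(p-1)\beta=\gamma$ reduces this to $\tfrac1r=1-\tfrac{p+1}{\gamma}$, and the integrability requirement $br<N$ becomes precisely $\gamma>\tfrac{N(p+1)}{N-b}$. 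The factor $\||x|^{-b}\|_{L^r(B)}$ depends only on $N,b,p$ and can be absorbed into the constant $C$.

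What remains is really only a bookkeeping obstacle: ensuring that the admissible range $\bigl(\tfrac{N(p+1)}{N-b},2^*\bigr)$ for $\gamma$ is non-empty. A direct computation shows that $\tfrac{N(p+1)}{N-b}<2^*$ is equivalent to $p<1+\tfrac{4-2b}{N-2}$, which is exactly the subcritical hypothesis of the lemma (and the constraint is vacuous when $N=1,2$ since $2^*=\infty$). The upper bound $\gamma<2^*$ keeps the Sobolev embedding $H^1(\rn)\hookrightarrow L^\gamma(\rn)$ at our disposal, ensuring that all norms on the right-hand side are finite whenever $u,v,\ffi,\xi\in H^1(\rn)$.
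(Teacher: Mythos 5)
Your argument is correct, and it is essentially the standard proof of this estimate: the paper itself does not prove the lemma but defers to the appendices of \cite{g08,g10}, where the same strategy is used --- split the integral at the unit sphere, drop the weight outside and apply three-factor H\"older with exponents $(\sigma,p+1,p+1)$, and inside put $|x|^{-b}$ into $L^r(B)$ via four-factor H\"older, with the condition $br<N$ translating exactly into $\gamma>\frac{N(p+1)}{N-b}$. Your exponent bookkeeping (including the non-emptiness of $(\frac{N(p+1)}{N-b},2^*)$ under the subcriticality hypothesis, and the Sobolev embedding guaranteeing finiteness of all the norms) checks out.
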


To prove Proposition~\ref{compact.prop}, we also need a concentration-compactness lemma.
Minor modifications to the proof of Proposition~1.7.6 in~\cite{caz}
yield the following result.

\begin{lemma}\label{conc-comp.lem}
Let $(v_n)\subset H^1(\rn)$ satisfy
\[
\lim_{n\to\infty}\Vert v_n\Vert_{L^2}=M \quad \text{and}\quad
\sup_{n\in\nat}\Vert \nabla v_n\Vert_{L^2}<\infty.
\]
Then there is a subsequence
$(v_{n_k})$ satisfying one of the three following properties:
\begin{itemize}
\item[\emph{(V)}] $\Vert v_{n_k}\Vert_{L^q} \to 0$ as $k\to\infty$, for all $q\in(2,2^*)$.
\item[\emph{(D)}]
There exist sequences $(w_k), (z_k)\subset H^1(\rn)$ such that:
\begin{enumerate}[label=\emph{(\roman*)}]
\item $\supp(w_k)\cap\supp(z_k)=\emptyset$, for all $k\in\N$,
\item $\disp \sup_{k\in\nat}\, (\Vert w_k\Vert_{H^1}+\Vert z_k\Vert_{H^1}) <\infty$,
\item $\Vert w_k\Vert_{L^2}\to\alpha M$ and $\Vert z_k\Vert_{L^2}\to(1-\alpha)M$ as $k\to\infty$, for some $\alpha\in(0,1)$,
\item $\disp \lim_{k\to\infty}\Big| \intrn |v_{n_k}|^q - \intrn |w_k|^q - \intrn |z_k|^q \Big| = 0$, for all $q\in[2,2^*)$,
\item $\disp \liminf_{k\to\infty} \Big\{ \intrn |\nabla v_{n_k}|^2 - \intrn |\nabla w_k|^2 - \intrn |\nabla z_k|^2 \Big\} \ge 0$.
\end{enumerate}
\item[\emph{(C)}]
There exist $v\in H^1(\rn)$ and a sequence $(y_k)\subset\rn$ such that
\[
v_{n_k}(\cdot-y_k) \to v \quad\text{in} \ L^q(\rn), \quad \text{for all } q\in[2,2^*).
\]
\end{itemize}
\end{lemma}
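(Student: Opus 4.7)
The plan is to follow the classical concentration-compactness method of P.-L.~Lions. For each $n\in\nat$ and $t>0$, I would introduce the \emph{concentration function}
\[
Q_n(t) = \sup_{y\in\rn}\int_{B(y,t)}|v_n|^2\diff x,
\]
which is nondecreasing in $t$ and uniformly bounded by $\Vert v_n\Vert_{L^2}^2\to M^2$. After a diagonal extraction I may assume $Q_n(t)$ converges pointwise on $(0,\infty)$ to a nondecreasing limit $Q(t)$, and set $\al=\lim_{t\to\infty}Q(t)\in[0,M^2]$. The three alternatives of the lemma correspond, respectively, to $\al=0$, $\al=M^2$, and $0<\al<M^2$.

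If $\al=0$, then (V) follows from Lions' classical vanishing lemma: a bounded sequence in $H^1(\rn)$ whose $L^2$ mass on balls of any fixed radius tends uniformly to zero converges to zero in every $L^q(\rn)$ with $q\in(2,2^*)$, by a covering argument combined with the Gagliardo--Nirenberg inequality. If $\al=M^2$, then (C) follows from tightness: I pick $y_k\in\rn$ almost attaining the supremum defining $Q_{n_k}(k)$, extract a subsequence with $v_{n_k}(\cdot-y_k)\wto v$ in $H^1$ and a.e., and combine the local Rellich--Kondrachov theorem with the tightness provided by $\al=M^2$ to upgrade this to strong convergence in $L^q(\rn)$ for every $q\in[2,2^*)$.

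The dichotomy case $0<\al<M^2$ is the principal technical step. For each small $\eps>0$, I would fix $R=R(\eps)$ with $Q(R)>\al-\eps$ and, exploiting the monotonicity of $Q_n$ in $t$ together with $\Vert v_n\Vert_{L^2}^2\le M^2+\eps$ for $n$ large, locate centers $y_k\in\rn$ and radii $R_k\to\infty$ such that
\[
\int_{B(y_k,R)}|v_{n_k}|^2\diff x \to \al \quad\text{and}\quad \int_{B(y_k,R_k)\sm B(y_k,R)}|v_{n_k}|^2\diff x < \eps.
\]
With a smooth radial cutoff $\chi$ satisfying $\chi\equiv 1$ on $B(0,1)$ and $\supp\chi\subset B(0,2)$, I set
\[
w_k(x)=\chi\bigl((x-y_k)/R\bigr)v_{n_k}(x), \quad z_k(x)=\bigl(1-\chi\bigl(2(x-y_k)/R_k\bigr)\bigr)v_{n_k}(x),
\]
so that (i) holds once $R_k>4R$. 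Properties (ii)--(iv) follow from the $H^1$-boundedness of $(v_n)$, the annular smallness, and the fact that $|v_{n_k}|^q-|w_k|^q-|z_k|^q$ is supported in the annulus $B(y_k,R_k)\sm B(y_k,R)$. For (v), the expansion
\[
|\nabla v_{n_k}|^2 = |\nabla w_k|^2 + |\nabla z_k|^2 + 2\,\re(\nabla w_k\cdot\overline{\nabla z_k}) + \text{cutoff remainder},
\]
combined with the disjointness of supports and the annular smallness, gives cross and remainder terms of size $O(\eps)+O(1/R)$. A diagonal extraction in $\eps\to 0$ then produces sequences $(w_k),(z_k)$ realising (i)--(v).

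The main obstacle will be this dichotomy step, and specifically the diagonal passage $\eps\to 0$ while preserving the gradient inequality (v). The argument is essentially identical to Cazenave's Proposition~1.7.6 in~\cite{caz}; the only minor adaptations needed here are to accommodate $\Vert v_n\Vert_{L^2}\to M$ rather than $\Vert v_n\Vert_{L^2}=M$, which is handled by absorbing the $o(1)$ error into the parameter $\eps$, and to verify the $L^q$ conclusion uniformly on the range $q\in[2,2^*)$. Note that the inhomogeneity of~\eqref{inls} plays no role at this stage, since Lemma~\ref{conc-comp.lem} is a purely abstract statement about bounded $H^1$ sequences.
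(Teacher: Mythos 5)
Your proposal is correct and takes essentially the same route as the paper, which simply invokes Proposition~1.7.6 of~\cite{caz} with minor modifications; your outline is precisely the Lions concentration-function argument underlying that proposition, together with the same adaptation (absorbing $\Vert v_n\Vert_{L^2}\to M$ into the $\eps$-parameter) that the paper implicitly relies on. No substantive difference to report.
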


We are now ready to prove the main result of this section.

\begin{proposition}\label{compact.prop}
Consider a sequence $(v_n)\subset H^1(\rn)$ satisfying
\begin{equation}\label{norms}
\lim_{n\to\infty}\Vert v_n\Vert_{L^2}=\Vert\psi\Vert_{L^2}, \quad
\lim_{n\to\infty}\Vert \nabla v_n\Vert_{L^2}=\Vert\nabla\psi\Vert_{L^2}, \quad
\limsup_{n\to\infty} E(v_n)\le 0.
\end{equation}
Then there exist a subsequence of $(v_n)$, still denoted $(v_n)$, and $\gamma_0\in\R$ such that
\[
\lim_{n\to\infty}\Vert v_n - e^{i\gamma_0}\psi \Vert_{H^1} = 0.
\]
\end{proposition}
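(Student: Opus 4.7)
The plan is to apply the concentration--compactness alternative of Lemma~\ref{conc-comp.lem} to $(v_n)$ with $M=\|\psi\|_{L^2}$, rule out the vanishing~(V) and dichotomy~(D) cases, and extract the desired limit from the compactness case~(C). Ruling out (V) is quick: applying Lemma~\ref{est.lem} with $u=v_n$, $v=0$ and $\varphi=\xi=v_n$ (together with the identities $(p-1)\beta=\gamma$ and $(p-1)\sigma=p+1$) yields $\int|x|^{-b}|v_n|^{p+1}\le C\bigl(\|v_n\|_\gamma^{p+1}+\|v_n\|_{p+1}^{p+1}\bigr)$. Under (V), both norms vanish since $\gamma,p+1\in(2,2^*)$, forcing $\lim E(v_n)=\tfrac12\|\nabla\psi\|_{L^2}^2>0$ (the strict inequality following from $E(\psi)=0$ and $\psi\not\equiv 0$), which contradicts $\limsup E(v_n)\le 0$.

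Excluding (D) is the technical heart of the proof, and the step I expect to be the main obstacle. Given the decomposition $w_k,z_k$ with disjoint supports and masses tending to $\alpha\|\psi\|_{L^2}$ and $(1-\alpha)\|\psi\|_{L^2}$ with $\alpha\in(0,1)$, the Gagliardo--Nirenberg inequality~\eqref{posE} applied separately to $w_k$ and $z_k$ gives
\[
E(w_k)+E(z_k)\ge \tfrac12\|\nabla w_k\|_{L^2}^2(1-\alpha^{p-1})+\tfrac12\|\nabla z_k\|_{L^2}^2(1-(1-\alpha)^{p-1})+o(1),
\]
with strictly positive coefficients. The difficulty is that Lemma~\ref{conc-comp.lem}(iv) splits only the unweighted $L^q$ integrals, whereas we need to split the weighted nonlinearity $\int|x|^{-b}|v_{n_k}|^{p+1}$. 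To overcome this, I would combine the disjoint supports (which yield $|w_k+z_k|^{p+1}=|w_k|^{p+1}+|z_k|^{p+1}$ pointwise) with Lemma~\ref{est.lem} applied to the difference $v_{n_k}-(w_k+z_k)$ (which, as in the construction underlying Lemma~\ref{conc-comp.lem}, can be taken to vanish in $L^q$ for $q\in[2,2^*)$), to deduce $\int|x|^{-b}|v_{n_k}|^{p+1}=\int|x|^{-b}|w_k|^{p+1}+\int|x|^{-b}|z_k|^{p+1}+o(1)$. Combined with (v), this produces $E(v_{n_k})\ge E(w_k)+E(z_k)+o(1)$, and a brief case analysis on whether $\|\nabla w_k\|_{L^2}$ and $\|\nabla z_k\|_{L^2}$ both remain bounded away from zero or one of them vanishes (in which case its potential part also vanishes by Lemma~\ref{est.lem} while the other's kinetic term inherits the full $\|\nabla\psi\|_{L^2}^2$) yields $\liminf E(v_{n_k})>0$, contradicting the hypothesis.

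Hence (C) holds and there exist $v\in H^1(\rn)$ and $(y_k)\subset\rn$ with $v_{n_k}(\cdot-y_k)\to v$ in $L^q$ for every $q\in[2,2^*)$; in particular $\|v\|_{L^2}=\|\psi\|_{L^2}$. Here the singular weight $|x|^{-b}$ actually \emph{helps}: if $|y_k|\to\infty$, the change of variables $y=x-y_k$ combined with $|y+y_k|^{-b}\to 0$ uniformly on bounded sets forces $\int|x|^{-b}|v_{n_k}|^{p+1}\to 0$, contradicting $\limsup E(v_n)\le 0$ together with $\|\nabla v_n\|_{L^2}\to\|\nabla\psi\|_{L^2}>0$. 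Thus $(y_k)$ is bounded, $y_k\to y_0$ along a subsequence, and setting $V(x)=v(x+y_0)$ we obtain $v_{n_k}\to V$ in $L^q$ for $q\in[2,2^*)$ and $v_{n_k}\wto V$ in $H^1$.

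To conclude, a further application of Lemma~\ref{est.lem} upgrades $L^q$ convergence to $\int|x|^{-b}|v_{n_k}|^{p+1}\to\int|x|^{-b}|V|^{p+1}$, so that $\lim E(v_{n_k})$ exists and equals $E(V)+\tfrac12\bigl(\|\nabla\psi\|_{L^2}^2-\|\nabla V\|_{L^2}^2\bigr)$. Combined with weak $H^1$ semicontinuity, $\limsup E(v_n)\le 0$ and~\eqref{posE} applied to $V$ (admissible since $\|V\|_{L^2}=\|\psi\|_{L^2}$), this squeezes $E(V)=0$ and $\|\nabla V\|_{L^2}=\|\nabla\psi\|_{L^2}$. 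Proposition~\ref{charact.prop} then identifies $V(x)=e^{i\gamma_0}\lambda_0^{N/2}\psi(\lambda_0 x)$, and the gradient equality forces $\lambda_0=1$, whence $V=e^{i\gamma_0}\psi$. Strong $H^1$ convergence finally follows from weak $H^1$ convergence together with $\|v_{n_k}\|_{H^1}\to\|V\|_{H^1}$.
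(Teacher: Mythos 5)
Your proposal is correct in substance and follows essentially the same route as the paper's proof: concentration--compactness via Lemma~\ref{conc-comp.lem}, elimination of vanishing through Lemma~\ref{est.lem}, elimination of dichotomy by splitting the weighted nonlinearity with $\xi_k=v_{n_k}-w_k-z_k$ and exploiting the strictly subcritical masses $\alpha\Vert\psi\Vert_{L^2}$, $(1-\alpha)\Vert\psi\Vert_{L^2}$ in~\eqref{posE}, boundedness of the translations from the decay of $|x|^{-b}$, and identification of the limit via Proposition~\ref{charact.prop}. One correction to your dichotomy step: property (D)(v) only bounds $\Vert\nabla w_k\Vert_{L^2}^2+\Vert\nabla z_k\Vert_{L^2}^2$ from \emph{above}, so when one gradient vanishes the other does not ``inherit the full $\Vert\nabla\psi\Vert_{L^2}^2$''; the right way to close is to note that your lower bound together with $\limsup E(v_{n_k})\le 0$ forces \emph{both} gradients to vanish, whence both potential terms vanish, the splitting gives $\int|x|^{-b}|v_{n_k}|^{p+1}\to 0$, and $E(v_{n_k})\to\frac12\Vert\nabla\psi\Vert_{L^2}^2>0$ --- which is exactly the contradiction the paper derives. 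A smaller remark on localization: the uniform decay of the translated weight on bounded sets handles only part of the integral; as in the paper's Step~2, you also need a H\"older estimate (using the local integrability of $|x|^{-b\alpha}$ and an exponent $(p+1)\beta<2^*$) near the point where the translated weight is singular, plus the crude bound $R^{-b}\Vert v_{n_k}\Vert_{p+1}^{p+1}$ on the complement. Your use of Lemma~\ref{est.lem} to pass to the limit in the potential term at the end, in place of the weak sequential continuity quoted from~\cite{g12}, is a harmless (and self-contained) variant.
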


\begin{proof}
The behaviour of the sequence $(v_n)$ is constrained by the concentration-compactness
principle, as stated in Lemma~\ref{conc-comp.lem}. The proof will proceed in several steps:
we will first show that property (C) holds, by ruling out (V) and (D).
Then we will show that the sequence $(y_k)$ in (C) is bounded.
Using Proposition~\ref{charact.prop}, this will lead to the desired conclusion.

\medskip
\noindent {\em Step 1: Compactness.}
Applying Lemma~\ref{est.lem} with $v=0$ and $u=\ffi=\xi=v_{n_k}$,
there exists $\gamma\in(\frac{N(p+1)}{N-b},2^*)$ such that
\[
\intrn |x|^{-b}|v_{n_k}|^{p+1}\diff x
\le C\big(\Vert v_{n_k}\Vert_\gamma^{p+1}+\Vert v_{n_k}\Vert_{p+1}^{p+1}\big).
\]
Since $\gamma,p+1\in(2,2^*)$,
(V) would imply that $\intrn |x|^{-b}|v_{n_k}|^{p+1}\diff x\to0$ and so
\begin{equation}\label{vancontr}
\lim_{k\to\infty} E(v_{n_k})
=\lim_{k\to\infty}\frac12\Vert \nabla v_{n_k}\Vert_2^2
-\frac{1}{p+1}\intrn|x|^{-b}|v_{n_k}|^{p+1}\diff x = \frac12\Vert \nabla \psi \Vert_2^2>0,
\end{equation}
which contradicts~\eqref{norms}. Therefore, (V) cannot occur.

Now suppose by contradiction that (D) holds. We claim that
\begin{equation}\label{potential}
\lim_{k\to\infty}\Big| \intrn|x|^{-b} |v_{n_k}|^{p+1}\diff x - \intrn |x|^{-b}|w_k|^{p+1} \diff x
- \intrn |x|^{-b}|z_k|^{p+1} \diff x\Big| = 0.
\end{equation}
It then follows from property (D)(v) in Lemma~\ref{conc-comp.lem} that
\begin{multline}\label{limsup}
\limsup_{k\to\infty} E(w_k)+E(z_k) \le
\frac12\liminf_{k\to\infty} \intrn |\nabla v_{n_k}|^2 \diff x\\
-\frac{1}{p+1}\liminf_{k\to\infty}\intrn |x|^{-b}|v_{n_k}|^{p+1}\diff x
\le\limsup_{k\to\infty} E(v_{n_k})\le 0.
\end{multline}
On the other hand, property (D)(iii) of Lemma~\ref{conc-comp.lem} with
$M=\Vert\psi\Vert_2$, together with inequality~\eqref{posE}, imply that
$E(w_k), E(z_k)\ge0$ for $k$ large enough, and so by~\eqref{limsup}
\[
E(w_k)\to 0 \quad\text{and}\quad E(z_k)\to 0 \quad\text{as} \ k\to\infty.
\]
But then, using again property (D)(iii) of Lemma~\ref{conc-comp.lem} and
inequality~\eqref{posE}, we see that
\[
\Vert \nabla w_k\Vert_2 \to 0 \quad\text{and}\quad \Vert \nabla z_k\Vert_2 \to 0
\quad\text{as} \ k\to\infty,
\]
which in turn implies that
\[
\lim_{k\to\infty} \intrn\! |x|^{-b} |v_{n_k}|^{p+1}\diff x =
\lim_{k\to\infty} \left( \intrn\! |x|^{-b}|w_k|^{p+1}\diff x + \intrn\! |x|^{-b}|z_k|^{p+1}\diff x\right) = 0,
\]
again leading to the contradiction~\eqref{vancontr}. Thus, to rule out
(D), we need only prove claim~\eqref{potential}, which we do now.
Defining $\xi_k=v_{n_k}-w_k-z_k$, it follows from the construction of
the sequences $w_k$ and $z_k$ in the proof of~\cite[Proposition~1.7.6]{caz}
that
\[
\big| |v_{n_k}|^{p+1}-|w_k|^{p+1}-|z_k|^{p+1} \big|
\le C |v_{n_k}|^p|\xi_k|
\]
and
\[
\Vert\xi_k\Vert_2\to0 \quad\text{as} \ k\to\infty.
\]
Since $\Vert\nabla\xi_k\Vert_2$ is bounded by property (D)(v), the Gagliardo-Nirenberg
inequality then implies that
\[
\Vert\xi_k\Vert_q\to0 \quad\text{as} \ k\to\infty, \quad\forall\,q\in[2,2^*).
\]
Hence, it follows from Lemma~\ref{est.lem} that
\begin{align*}
&\Big| \intrn|x|^{-b} |v_{n_k}|^{p+1} \diff x - \intrn |x|^{-b}|w_k|^{p+1}\diff x
- \intrn |x|^{-b}|z_k|^{p+1}\diff x \Big| \\
& \le C\intrn|x|^{-b} |v_{n_k}|^p  |\xi_k| \diff x
\le C\big(\Vert v_{n_k}\Vert_\gamma^p\Vert \xi_k\Vert_\gamma
+\Vert v_{n_k}\Vert_{p+1}^p\Vert \xi_k\Vert_{p+1}\big)
\to0 \ \text{as} \ k\to\infty,
\end{align*}
which proves the claim. Therefore, we conclude from Lemma~\ref{conc-comp.lem}
that there exist $v\in H^1(\rn)$ and a sequence $(y_k)\subset\real^N$ such that
\begin{equation}\label{compact}
v_{n_k}(\cdot-y_k) \to v \quad\text{in} \ L^q(\rn), \quad \forall\,q\in[2,2^*).
\end{equation}

\medskip
\noindent
{\em Step 2: Localization.}
We will now show that $(y_k)\subset\real^N$ is bounded.
Suppose by contradiction that $|y_k|\to\infty$ as $k\to\infty$
(up to a subsequence).
Note that $E(v_{n_k})$ can be written as
\begin{equation}\label{translate}
E(v_{n_k})=\frac12\Vert\nabla v_{n_k}\Vert_2^2
-\frac{1}{p+1}\intrn|x-y_k|^{-b}|\wt{v}_{n_k}(x)|^{p+1}\diff x,
\end{equation}
with $\wt{v}_{n_k}(x)=v_{n_k}(x-y_k)$. We will show that the second term
in the right-hand side of~\eqref{translate} goes to zero as $k\to\infty$, so that
\begin{equation}\label{contrE}
E(v_{n_k})\to \frac12\Vert\nabla \psi\Vert_2^2>0 \quad\text{as} \ k\to\infty,
\end{equation}
which contradicts~\eqref{norms}. We split the integral into two parts, as
\[
\underbrace{\int_{|x-y_k|< R}|x-y_k|^{-b}|\wt{v}_{n_k}(x)|^{p+1}\diff x}_{\mathrm{I}} +
\underbrace{\int_{|x-y_k|\ge R}|x-y_k|^{-b}|\wt{v}_{n_k}(x)|^{p+1}\diff x}_{\mathrm{II}},
\]
for some $R>0$. First, by H\"older's inequality,
\begin{equation}\label{holdI}
\mathrm{I} \le \Big( \int_{|x-y_k|< R} |x-y_k|^{-b\alpha}\diff x\Big)^\frac{1}{\alpha}
\Big( \int_{|x-y_k|< R} |\wt{v}_{n_k}(x)|^{(p+1)\beta}\diff x\Big)^\frac{1}{\beta}
\end{equation}
where $\alpha,\beta\ge 1$ satisfy $\frac{1}{\alpha}+\frac{1}{\beta}=1$. Now the first
factor in the right-hand side of~\eqref{holdI} is finite provided $\beta>\frac{N}{N-b}$.
In fact it is possible to choose $\beta$ so that $\beta(p+1)\in(\frac{N(p+1)}{N-b},2^*)$
and it follows from~\eqref{compact} that
\[
\int_{|x-y_k|< R} |\wt{v}_{n_k}(x)|^{(p+1)\beta}\diff x \to 0 \quad\text{as} \ k\to\infty.
\]
On the other hand,
\[
\mathrm{II} \le R^{-b} \intrn |\wt{v}_{n_k}(x)|^{p+1} \diff x \le C R^{-b}
\]
by the Sobolev embedding theorem and the boundedness of $(\wt{v}_{n_k})$
in $H^1(\rn)$. Hence, $\mathrm{II}$ can be made arbitrarily small by choosing $R$ large
enough, uniformly in $k$. This completes the proof of~\eqref{contrE}, and we conclude
that the sequence $(y_k)$ is bounded in $\real^N$.

\medskip
\noindent
{\em Step 3: Conclusion.}
By passing to a subsequence if necessary, we can suppose that $y_k\to y^*$ as $k\to\infty$,
for some $y^*\in\rn$. Hence,
\[
v_{n_k}\to v^*=v(\cdot+y^*) \quad\text{in} \ L^q(\rn), \quad\forall\, q\in[2,2^*).
\]
Furthermore, we can also suppose that $v_{n_k} \wto v^*$ weakly in
$H^1(\rn)$, and it follows from~\eqref{norms} that
\[
\Vert v^*\Vert_2 = \Vert \psi\Vert_2 \quad\text{and}\quad
\Vert\nabla v^*\Vert_2\le \Vert \nabla\psi\Vert_2.
\]
Now, by Lemma~2.1 of~\cite{g12},
$v\mapsto \intrn |x|^{-b} |v|^{p+1}\diff x$
is weakly sequentially continuous and we have, by~\eqref{norms},
\begin{align*}
E(v^*)&=\frac12\Vert\nabla v^*\Vert_2^2-\frac{1}{p+1}\intrn |x|^{-b} |v^*|^{p+1}\diff x\\
&\le \frac12\Vert\nabla\psi\Vert_2^2-\frac{1}{p+1}\intrn |x|^{-b} |v^*|^{p+1}\diff x
=\lim_{k\to\infty} E(v_{n_k})\le 0.
\end{align*}
But it follows from~\eqref{posE} that $E(v^*)\ge0$, and so
\[
E(v^*) = 0\quad \text{and}\quad \|\nabla v^*\|_2=\|\nabla\psi\|_2.
\]
Together with $\|v^*\|_2=\|\psi\|_2$,
these two last identities imply, by Proposition~\ref{charact.prop}, that
$v^*=e^{i\gamma_0}\psi$ for some $\gamma_0\in\real$.
Finally, since we now have $\nhu{v_{n_k}}\to \nhu{\psi}=\nhu{v^*}$, $(v_{n_k})$ converges strongly
to $v^*$ in $H^1(\rn)$, which concludes the proof of Proposition~\ref{compact.prop}.
\end{proof}

\begin{remark}
It is worth noting here that Proposition~\ref{compact.prop} can be used to prove
the `orbital stability' of the ground state $\psi$, in the sense of Theorem~3.7 of~\cite{r}.
Moreover, the notion of stability is stronger here, since we can take
$x(t)\equiv0$ for the translation shift appearing in Theorem~3.7 of~\cite{r}.
\end{remark}


\section{Virial identities} \label{virial.sec}

Let us define
\[
\Sigma = \{ u\in H^1(\rn)\ |\ xu\in L^2(\rn) \}.
\]
Then, for $u(t)\in\Sigma$, the quantity
\begin{equation} \label{Gamma.def}
\Gamma(t) = \intrn |x|^2|u(t,x)|^2\diff x
\end{equation}
is well defined, and when $u$ is a solution of the classic, homogeneous, NLS equation
(\emph{i.e.}~\eqref{inls} with $b=0$),
it is well known that $\Gamma'$ and $\Gamma''$ have simple expressions,
very useful to prove blow-up results when $u_0\in\Sigma$.
The following lemma shows that $\Gamma$ is still a key quantity in
the inhomogeneous case, $b>0$.

\begin{lemma} \label{virial.lem}
Let $u$ be a solution of~\eqref{inls} defined on $[0,T)$, such that $u(t)\in\Sigma$ for every $t\in [0,T)$.
Then, for all $t\in [0,T)$, we have
\begin{equation} \label{virial1}
\Gamma'(t) = 4\im \intrn \bar u(t,x)(\nabla u(t,x)\cdot x)\diff x
\end{equation}
and
\begin{equation} \label{virial2}
\Gamma''(t) = 16E(u(t)) + \frac{4}{p+1}(N-Np-2b+4)\intrn |x|^{-b}|u(t,x)|^{p+1}\diff x.
\end{equation}
\end{lemma}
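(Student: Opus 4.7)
The plan is to establish both identities by differentiating $\Gamma$ under the integral, substituting $\partial_t u$ from~\eqref{inls}, and cleaning up by integration by parts; for the formal calculation I assume enough regularity on $u$, deferring the analytic justification to the last paragraph.

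For~\eqref{virial1} I would write
\[
\Gamma'(t) = 2\re\intrn |x|^2 \bar u\,\partial_t u \diff x,
\]
substitute $\partial_t u = i\Delta u + i|x|^{-b}|u|^{p-1}u$, and note that the nonlinear contribution $2\re(i|x|^{2-b}|u|^{p+1})$ is purely imaginary and therefore vanishes. This leaves $\Gamma'(t) = -2\im\int |x|^2\bar u\,\Delta u$, and a single integration by parts in $\int |x|^2\bar u\,\Delta u = -2\int \bar u\,(x\cdot\nabla u) - \int |x|^2|\nabla u|^2$ reproduces~\eqref{virial1}, since the second integral is real and drops out under $\im$.

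For~\eqref{virial2} I would differentiate $\Gamma'$ once more and again use the equation to substitute $\partial_t u$ and $\partial_t \bar u$, then split the result into a kinetic piece and a nonlinear piece. The kinetic piece reduces to the classical virial identity
\[
\re\intrn \Delta\bar u\,(x\cdot\nabla u)\diff x = \tfrac{N-2}{2}\intrn |\nabla u|^2\diff x,
\]
exactly as in the homogeneous case $b=0$, and contributes $8\int|\nabla u|^2$ to $\Gamma''$. For the nonlinear piece I would use the pointwise identity $\re(\bar u\,(x\cdot\nabla u)) = \tfrac12 x\cdot\nabla|u|^2$ together with the Leibniz expansion of $x\cdot\nabla(|x|^{-b}|u|^{p-1}u)$, which reduces everything to integrals of $|x|^{-b}\,x\cdot\nabla|u|^{p+1}$ and of $(x\cdot\nabla|x|^{-b})|u|^{p+1}$. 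The inhomogeneous-specific identities
\[
x\cdot\nabla|x|^{-b} = -b|x|^{-b}, \qquad \di(x|x|^{-b}) = (N-b)|x|^{-b}
\]
then allow me, after one more integration by parts, to collect everything into a single multiple of $\int |x|^{-b}|u|^{p+1}$. Rewriting the total using $E(u)$ as defined in~\eqref{energy} produces the coefficients in~\eqref{virial2}.

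The main obstacle is analytic rather than algebraic: for a strong $H^1$ solution with only $xu(t)\in L^2$, neither the differentiation under the integral nor the repeated integration by parts is a priori legal, and the singularity of $|x|^{-b}$ at the origin has to be controlled. I would approximate $u_0$ in $H^1\cap\Sigma$ by Schwartz data, invoke the local well-posedness theory of~\cite{g12} to propagate $\Sigma$-regularity of the approximating solutions, carry out the computation for them (with a radial cutoff around $x=0$ whose contribution vanishes in the limit because $b<N$ makes $|x|^{-b}$ locally integrable), and finally pass to the limit using continuity of the flow in $H^1\cap\Sigma$.
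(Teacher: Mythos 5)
Your proposal is correct and follows essentially the same route as the paper: differentiate $\Gamma$ under the integral, substitute the equation, and reduce the kinetic part via the classical identity $\re\int\Delta\bar u\,(x\cdot\nabla u)=\tfrac{N-2}{2}\int|\nabla u|^2$ and the nonlinear part via $x\cdot\nabla|x|^{-b}=-b|x|^{-b}$ and $\di(|x|^{-b}x)=(N-b)|x|^{-b}$, after which the coefficients combine to give~\eqref{virial2}. The only difference is that you sketch the approximation argument in some detail, whereas the paper dispatches it with a one-line ``by regularization'' remark.
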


\begin{proof}
By regularization, we may assume $u$ smooth for the following calculation.
Since $u$ satisfies~\eqref{inls}, we first find
\begin{align*}
\Gamma'(t) &= 2\re\int |x|^2 \bar u\partial_t u = 2\re\int |x|^2\bar u i (\D u+|x|^{-b}|u|^{p-1}u) \\
&= -2\im\int |x|^2(\bar u\D u +|x|^{-b}|u|^{p+1}) = -2\im\int |x|^2 \bar u\D u.
\end{align*}
Integrating by parts and using $\nabla |x|^2 = 2x$, we obtain
\[
\Gamma'(t) = 2\im\int \nabla u\cdot (\bar u \nabla |x|^2 +|x|^2\nabla\bar u)
= 4\im\int \bar u(\nabla u\cdot x).
\]
Using again an integration by parts and denoting $\nabla\cdot v = \sum_j \partial_{x_j}v_j$, we now compute
\begin{align*}
\Gamma''(t) &= 4\im\int \partial_t\bar u(x\cdot\nabla u)+\bar u(x\cdot\nabla\partial_t u)
= 4\im\int \partial_t u [-x\cdot\nabla \bar u -\nabla\cdot(\bar ux)] \\
&= 4\im\int \partial_t u[-2x\cdot\nabla\bar u -\bar u\nabla\cdot x]
= -8\im\int \partial_t u(x\cdot\nabla\bar u) -4N\im\int \partial_t u\bar u.
\end{align*}
To compute these last two terms, we use~\eqref{inls} and first find
\begin{align}
-4N\im\int \partial_t u\bar u &= -4N\re\int \bar u(\D u+|x|^{-b}|u|^{p-1}u) \notag \\
&= 4N\int |\nabla u|^2 -4N\int |x|^{-b}|u|^{p+1}. \label{virialpart1}
\end{align}
Similarly, we also find
\begin{align*}
-8\im\int \partial_t u(x\cdot\nabla\bar u) &= -8\re\int (x\cdot\nabla\bar u)(\D u+|x|^{-b}|u|^{p-1}u) \\
&= -8\re\int \D u(x\cdot\nabla\bar u) -8\int |x|^{-b}x\cdot |u|^{p-1}\re(u\nabla\bar u) \\
&= A+B.
\end{align*}
Since $\nabla(|u|^{p+1}) = (p+1)|u|^{p-1}\re(u\nabla\bar u)$ and $\nabla(|x|^{-b})=-b|x|^{-b-2}x$,
we find by an integration by parts
\begin{align}
B &= \frac{8}{p+1}\int |u|^{p+1}\nabla\cdot(|x|^{-b}x)
= \frac{8}{p+1}\int |u|^{p+1}(\nabla |x|^{-b}\cdot x+|x|^{-b}\nabla\cdot x) \notag \\
&= \frac{8}{p+1}\int |u|^{p+1}(-b|x|^{-b}+N|x|^{-b}) = \frac{8(N-b)}{p+1} \int |x|^{-b}|u|^{p+1}. \label{virialpart2}
\end{align}
Similarly, since $\partial_{x_k}(|\partial_{x_j}u|^2) = 2\re(\partial_{x_j}u\partial_{x_j}\partial_{x_k}\bar u)$
for $1\le j,k\le N$, we find
\begin{align}
A &= -8\sum_{j,k} \re\int \partial_{x_j}^2u\, x_k\partial_{x_k}\bar u
= 8\sum_{j,k} \re\int \partial_{x_j}u (\delta_{j,k}\partial_{x_k}\bar u + x_k\partial_{x_j}\partial_{x_k}\bar u) \notag \\
&= 8\sum_j \int |\partial_{x_j}u|^2 +4\sum_{j,k} \int x_k\partial_{x_k}(|\partial_{x_j}u|^2) \notag \\
&= 8\sum_j \int |\partial_{x_j}u|^2 -4\sum_{j,k} \int |\partial_{x_j}u|^2 \label{virialpart3}
= (8-4N)\int |\nabla u|^2,
\end{align}
where we wrote $\delta_{j,k}=1$ for $j=k$ and $0$ otherwise.
Finally, gathering~\eqref{virialpart1},~\eqref{virialpart2} and~\eqref{virialpart3}, we obtain
\begin{align*}
\Gamma''(t) &= 8\int |\nabla u|^2 + \frac{4}{p+1}(N-Np-2b)\int |x|^{-b}|u|^{p+1} \\
&= 16E(u) +\frac{4}{p+1}(N-Np-2b+4)\int |x|^{-b}|u|^{p+1},
\end{align*}
from the definition~\eqref{energy} of the energy, which concludes the proof of the lemma.
\end{proof}

\begin{remark}
\rm
Note that the previous lemma is valid for any energy subcritical value of $p$,
\emph{i.e.}~with the restriction $p<1+\frac{4-2b}{N-2}$ if $N\ge 3$. In our $L^2$ critical case,
where $p=1+\frac{4-2b}{N}$, identity~\eqref{virial2} simply reduces, thanks to~\eqref{energy}, to
\begin{equation} \label{GammaSeconde}
\Gamma''(t) = 16E(u_0),
\end{equation}
which is also the key identity to classify the blow-up solutions in the homogeneous case, $b=0$.
\end{remark}


\section{Pseudo-conformal transformation} \label{pseudo-conf.sec}

We now establish the pseudo-conformal invariance of~\eqref{inls},
which was observed by the second author in~\cite[Section 3]{g12}.
For the reader's convenience, and also to be coherent with the notation of the present
paper, we prove the following statement.

\begin{lemma} \label{pseudo-conf.lem}
Let $u$ be a global solution of~\eqref{inls}.
Then, for all $T\in\R$, the function~$u_T$ defined by
\[
u_T(t,x) = \frac{e^{-i\frac{|x|^2}{4(T-t)}}}{(T-t)^{N/2}} u\left(\frac{1}{T-t},\frac{x}{T-t}\right)
\]
is also a solution of~\eqref{inls}, defined on $(-\infty,T)$, and has the same mass as $u$.
\end{lemma}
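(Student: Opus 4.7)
The plan is to verify both assertions by a direct computation after passing to the natural new variables $\tau = 1/(T-t)$ and $y = x/(T-t) = \tau x$. For mass conservation, the phase factor $e^{-i|x|^2/4(T-t)}$ has modulus one, so performing the change of variable $y = \tau x$ in $\int|u_T(t,x)|^2\diff x$, with Jacobian $(T-t)^{-N}$, yields $\Vert u_T(t)\Vert_{L^2} = \Vert u(1/(T-t))\Vert_{L^2} = \Vert u_0\Vert_{L^2}$ by the conservation of the $L^2$ norm along the flow of~\eqref{inls}.

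To check that $u_T$ solves~\eqref{inls} on $(-\infty, T)$, I would write $u_T(t,x) = \tau^{N/2} e^{-i\tau|x|^2/4} u(\tau, \tau x)$ and compute $i\partial_t u_T + \Delta u_T$ term by term, using $\partial_t \tau = \tau^2$, $\nabla_x y = \tau I$, and the identity
\[
\Delta_x\bigl(e^{-i\tau|x|^2/4}\bigr) = \Bigl(-\tfrac{iN\tau}{2} - \tfrac{\tau^2|x|^2}{4}\Bigr) e^{-i\tau|x|^2/4}.
\]
Many cross terms appear, but the classical pseudo-conformal algebra ensures that they either cancel in pairs---notably the Galilean-like mixed term $\tau x\cdot\nabla_y u$ produced by $\nabla_x e^{-i\tau|x|^2/4}\cdot\nabla_x[u(\tau,\tau x)]$ combines with the contribution of $\partial_t u(\tau, \tau x)$ to eliminate first-order gradients in $y$---or reassemble into $\tau^{N/2+2} e^{-i\tau|x|^2/4}$ times $[i\partial_\tau u + \Delta_y u](\tau, y)$.

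On the nonlinear side, a direct calculation gives
\[
|x|^{-b}|u_T|^{p-1}u_T = \tau^{b + pN/2}\, e^{-i\tau|x|^2/4}\, |y|^{-b}|u(\tau,y)|^{p-1} u(\tau,y),
\]
and the exponent $b + pN/2$ matches the $\tau^{N/2+2}$ produced by $i\partial_t + \Delta$ precisely when $(p-1)N/2 + b = 2$, which is the $L^2$-critical relation $p = 1 + (4-2b)/N$ assumed throughout the paper. After collecting all contributions, one recovers $\tau^{N/2+2} e^{-i\tau|x|^2/4}$ times the original equation evaluated at $(\tau, y)$, which vanishes since $u$ solves~\eqref{inls}. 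The main obstacle is purely bookkeeping: carefully tracking the numerous cross terms produced by $\Delta_x$ acting on the triple product and matching powers of $\tau$. The singular weight $|x|^{-b}$ plays no role in the phase-Laplacian cancellations; its sole effect is to fix the admissible scaling exponent, and that constraint is exactly the $L^2$-critical condition.
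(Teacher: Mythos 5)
Your proposal is correct and takes essentially the same route as the paper: a direct term-by-term computation of $i\partial_t u_T+\Delta u_T+|x|^{-b}|u_T|^{p-1}u_T$, in which the phase/Laplacian cross terms cancel exactly as in the classical case $b=0$ and the weight contributes $|x|^{-b}=\tau^b|y|^{-b}$, so that the powers of $\tau=1/(T-t)$ match precisely under the $L^2$-critical relation. The paper performs the same bookkeeping explicitly in the original variables, using $\frac{N}{2}\cdot\frac{4-2b}{N}=2-b$, which is exactly your exponent condition $b+pN/2=N/2+2$, and likewise obtains mass conservation from the unimodular phase and the $L^2$ scaling.
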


\begin{proof}
A straightforward calculation gives first
\[
\partial_t u_T(t,x) = \frac{e^{-i\frac{|x|^2}{4(T-t)}}}{(T-t)^{N/2+2}} \left[ \frac{N}{2}(T-t)u -i\frac{|x|^2}{4}u
+\partial_t u +x\cdot\nabla u \right]\! \left(\frac{1}{T-t},\frac{x}{T-t}\right).
\]
We also find
\[
\D u_T(t,x) = \frac{e^{-i\frac{|x|^2}{4(T-t)}}}{(T-t)^{N/2+2}} \left[ -\frac{|x|^2}{4}u -i\frac{N}{2}(T-t)u
-ix\cdot\nabla u +\D u\right]\! \left(\frac{1}{T-t},\frac{x}{T-t}\right)
\]
and, since $\frac{N}{2}\left(\frac{4-2b}{N}\right) = 2-b$,
\begin{align*}
|x|^{-b}|u_T|^{\frac{4-2b}{N}} u_T(t,x) &= |x|^{-b}\frac{1}{(T-t)^{2-b}} \frac{e^{-i\frac{|x|^2}{4(T-t)}}}{(T-t)^{N/2}}
|u|^{\frac{4-2b}{N}} u \left(\frac{1}{T-t},\frac{x}{T-t}\right) \\
&= \left| \frac{x}{T-t} \right|^{-b} \frac{e^{-i\frac{|x|^2}{4(T-t)}}}{(T-t)^{N/2+2}}
|u|^{\frac{4-2b}{N}} u \left(\frac{1}{T-t},\frac{x}{T-t}\right).
\end{align*}
It follows that
\begin{multline*}
i\partial u_T(t,x) +\D u_T(t,x) + |x|^{-b}|u_T|^{\frac{4-2b}{N}}u_T(t,x) \\
= \frac{e^{-i\frac{|x|^2}{4(T-t)}}}{(T-t)^{N/2+2}}
\left[ i\partial_t u +\D u + |x|^{-b} |u|^{\frac{4-2b}{N}} u\right]\! \left(\frac{1}{T-t},\frac{x}{T-t}\right) = 0,
\end{multline*}
since $u$ satisfies~\eqref{inls}. The fact that $\Vert u_T(t)\Vert_{L^2_x}=\Vert u(t)\Vert_{L^2_x}$
follows from the $L^2$~scaling invariance.
\end{proof}

We can now construct, as announced in the introduction, a 3-parameter family of critical mass solutions of~\eqref{inls}
which blow up in finite time.

\begin{proposition} \label{S.prop}
For all $T\in\R$, $\lambda_0>0$ and $\gamma_0\in\R$, the function $S_{T,\lambda_0,\gamma_0}$, defined by
\begin{equation} \label{S2.def}
S_{T,\lambda_0,\gamma_0}(t,x) = e^{i\gamma_0} e^{i\frac{\lambda_0^2}{T-t}} e^{-i\frac{|x|^2}{4(T-t)}}
\left(\frac{\lambda_0}{T-t}\right)^{N/2}\psi\left( \frac{\lambda_0x}{T-t} \right),
\end{equation}
is a critical mass solution of~\eqref{inls} defined on $(-\infty,T)$, and which blows up with speed
\[
\nld{\nabla S_{T,\lambda_0,\gamma_0}(t)} \sim \frac{C}{T-t} \quad \text{as } t\uparrow T,
\]
for some $C>0$.
\end{proposition}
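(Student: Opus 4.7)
The plan is to realize $S_{T,\lambda_0,\gamma_0}$ as the image of the standing wave $e^{it}\psi(x)$ under the composition of the phase, scaling, and pseudo-conformal symmetries already identified in the excerpt. Starting from the global solution $v(t,x)=e^{i\gamma_0}e^{it}\psi(x)$ (invariance (b) applied to the standing wave), the $L^2$-scaling (c) with parameter $\lambda_0$ produces a second global solution $w(t,x)=\lambda_0^{N/2}e^{i\gamma_0}e^{i\lambda_0^2 t}\psi(\lambda_0 x)$. Since $w$ is global, Lemma~\ref{pseudo-conf.lem} applies at time $T$, and a direct substitution of $w$ into the formula for $w_T$ yields precisely $S_{T,\lambda_0,\gamma_0}$ as defined in~\eqref{S2.def}. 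Consequently $S_{T,\lambda_0,\gamma_0}$ is a solution of~\eqref{inls} on $(-\infty,T)$.

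The critical-mass assertion is then immediate: each symmetry used preserves the $L^2$-norm (trivially for the phase, by design for the $L^2$-scaling, and by the last sentence of Lemma~\ref{pseudo-conf.lem} for the pseudo-conformal map), so $\nld{S_{T,\lambda_0,\gamma_0}(t)}=\nld{\psi}$ for every $t<T$. For the blow-up rate, I would set $\lambda(t)=\lambda_0/(T-t)$, differentiate~\eqref{S2.def}, and change variables $y=\lambda(t)x$. Because $\psi$ is real, the cross term involves $\psi(y)\,y\cdot\nabla\psi(y)$ multiplied by a purely imaginary coefficient and so contributes zero real part, leaving
\[
\nld{\nabla S_{T,\lambda_0,\gamma_0}(t)}^2 = \lambda(t)^2\nld{\nabla\psi}^2 + \frac{1}{4\lambda_0^2}\intrn|y|^2|\psi(y)|^2\diff y.
\]
The last integral is finite because $\psi$ decays exponentially at infinity, hence lies in $\Sigma$. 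As $t\uparrow T$, the first term dominates and one obtains $\nld{\nabla S_{T,\lambda_0,\gamma_0}(t)}\sim \lambda_0\nld{\nabla\psi}/(T-t)$, with $C=\lambda_0\nld{\nabla\psi}$.

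The only real difficulty is bookkeeping when chaining the transformations: one has to track carefully how the factors $\lambda_0^{N/2}$, the oscillating phase $e^{i\lambda_0^2/(T-t)}$, and the Gaussian $e^{-i|x|^2/(4(T-t))}$ combine to produce exactly the expression~\eqref{S2.def}. Since both the $L^2$-scaling and the pseudo-conformal invariance have already been established in the excerpt, no further analytical input is required and the argument reduces to an algebraic verification together with the gradient computation sketched above.
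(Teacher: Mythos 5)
Your proposal is correct and follows exactly the paper's route: the paper likewise obtains $S_{T,\lambda_0,\gamma_0}$ by applying Lemma~\ref{pseudo-conf.lem} to the global solution $e^{i\gamma_0}e^{i\lambda_0^2t}\lambda_0^{N/2}\psi(\lambda_0x)$, i.e.\ the standing wave renormalized by the phase and scaling symmetries. Your explicit computation of the blow-up rate (which the paper leaves implicit) is also correct, the cross term vanishing precisely because $\psi$ is real-valued.
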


\begin{proof}
The proposition is a simple consequence of Lemma~\ref{pseudo-conf.lem} applied to the global solution
\[
u_{\lambda_0,\gamma_0}(t,x) = e^{i\gamma_0} e^{i\lambda_0^2 t}\lambda_0^{N/2}\psi(\lambda_0x),
\]
which is nothing more than the renormalized version of the standing wave $u(t,x)=e^{it}\psi(x)$
under the scaling and phase symmetries.
\end{proof}

\begin{remark}
Note that the blow-up solutions of the family exhibited in Proposition~\ref{S.prop} can all be retrieved from the solution
\[
S(t,x) := S_{0,1,0}(t,x) = e^{i\frac{|x|^2}{4t}} e^{-\frac{i}{t}} \frac{1}{|t|^{N/2}}\psi\left(-\frac{x}{t}\right),
\]
defined on $(-\infty,0)$ and which blows up at $t=0$ with speed
\[
\nld{\nabla S(t)} \sim \frac{C}{|t|}\quad \text{as } t\uparrow0,
\]
for some $C>0$. Indeed, all the solutions $S_{T,\lambda_0,\gamma_0}$ are equal to $S$,
up to the symmetries (a), (b) and (c) stated in the introduction.
Namely, if we apply the changes $u(t,x)\to \lambda_0^{-N/2}u(\lambda_0^{-2}t,\lambda_0^{-1}x)$, $u(t,x)\to u(t-T,x)$
and finally $u(t,x)\to e^{i\gamma_0}u(t,x)$ to $S$, we obtain $S_{T,\lambda_0,\gamma_0}$.
\end{remark}


\section{Proof of Theorem~\ref{main.thm}} \label{proof.sec}

Before starting the proof of Theorem~\ref{main.thm}, we need
to control the $L^2$ norm of the gradient of our solution by its energy in the case
$\nld{u}=\nld{\psi}$, for which~\eqref{posE} does not imply~\eqref{subcritical} anymore.
The following observation of Banica~\cite{b} is relevant in this context.
For $u\in H^1(\rn)$, $\theta\in C_0^{\infty}(\rn)$ real-valued and $s\in\R$,
we have $\nabla(ue^{is\theta}) = (\nabla u+isu\nabla\theta)e^{is\theta}$, and so
\[
|\nabla(ue^{is\theta})|^2 = |\nabla u|^2 +2s\nabla\theta\cdot\im(\bar u\nabla u) +s^2|\nabla\theta|^2|u|^2.
\]
By integrating with respect to $x\in\rn$, we get
\begin{equation} \label{Banica}
E(ue^{is\theta}) = E(u) +s\int \nabla\theta\cdot\im(\bar u\nabla u) +\frac{s^2}{2}\int |\nabla\theta|^2|u|^2.
\end{equation}
We can now easily prove the following refined Cauchy-Schwarz inequality for critical mass functions.

\begin{lemma} \label{Banica.lem}
Let $u\in H^1(\rn)$ be a function such that $\nld{u}=\nld{\psi}$. Then, for all $\theta\in C_0^{\infty}(\rn)$, one has
\[
\left| \int \nabla\theta\cdot\im(\bar u\nabla u)\right| \le \sqrt{2E(u)} \left(\int |\nabla\theta|^2|u|^2\right)^{1/2}.
\]
\end{lemma}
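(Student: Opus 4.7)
The plan is to combine the identity \eqref{Banica} with the mass–energy inequality \eqref{posE} and exploit the sign condition to extract a discriminant bound.

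First I would fix $u\in H^1(\rn)$ with $\nld{u}=\nld{\psi}$ and a real-valued $\theta\in C_0^\infty(\rn)$, and consider the one-parameter family $v_s := u e^{is\theta}$, $s\in\R$. The pointwise identity $|v_s|=|u|$ gives $\nld{v_s}=\nld{u}=\nld{\psi}$ for every $s$, so inequality~\eqref{posE} applied to $v_s$ yields
\[
E(v_s) \ge \frac12 \Vert\nabla v_s\Vert_2^2\Bigl(1-1\Bigr) = 0
\]
for every $s\in\R$.

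Next I would read off from the already established formula~\eqref{Banica} that the map
\[
s \longmapsto P(s) := E(u e^{is\theta}) = \frac{s^2}{2}\int|\nabla\theta|^2|u|^2 + s\int\nabla\theta\cdot\im(\bar u\nabla u) + E(u)
\]
is a real quadratic polynomial in $s$, and by the previous step $P(s)\ge 0$ for all $s\in\R$. The conclusion then follows purely by elementary algebra: if the leading coefficient $\int|\nabla\theta|^2|u|^2$ is strictly positive, nonnegativity of $P$ forces its discriminant to be nonpositive, i.e.
\[
\left(\int\nabla\theta\cdot\im(\bar u\nabla u)\right)^2 \le 2E(u)\int|\nabla\theta|^2|u|^2,
\]
and taking square roots gives the claim.

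The only mild subtlety is the degenerate case $\int|\nabla\theta|^2|u|^2=0$, where $P$ is affine in $s$; here the nonnegativity of $P$ on all of $\R$ forces the linear coefficient $\int\nabla\theta\cdot\im(\bar u\nabla u)$ to vanish, so the inequality reduces to $0\le 0$ and holds trivially. This is a straightforward case distinction, not really an obstacle, so the whole argument is essentially a one-line application of the nonnegative-quadratic trick once \eqref{Banica} and \eqref{posE} are in hand.
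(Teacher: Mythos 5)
Your proof is correct and follows exactly the paper's argument: apply \eqref{posE} to $ue^{is\theta}$ (whose $L^2$ norm equals $\nld{\psi}$) to get nonnegativity of the quadratic polynomial \eqref{Banica} in $s$, then conclude via the discriminant. The explicit treatment of the degenerate case where the leading coefficient vanishes is a minor added care that the paper leaves implicit.
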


\begin{proof}
For all $s\in\R$, we now have $\nld{ue^{is\theta}} = \nld{u} = \nld{\psi}$, so $E(ue^{is\theta})\ge 0$ and $E(u)\ge0$ by~\eqref{posE}.
The result follows from the quadratic polynomial expression~\eqref{Banica} in $s$ of $E(ue^{is\theta})$,
which thus must have a non-positive discriminant.
\end{proof}

We now have all the tools to prove Theorem~\ref{main.thm}.
Let $u$ be a solution of~\eqref{inls} such that $\nld{u_0}=\nld{\psi}$
and which blows up in finite time: there exists $T>0$ such that
$\lim_{t\uparrow T} \nld{\nabla u(t)} = +\infty$. The core idea of the proof is to integrate
the equation backwards in time, from the blow-up time, in order to show that $u_0$ belongs to
the family of solutions~\eqref{S2.def}. We shall proceed in four steps.

\medskip
\noindent {\em Step 1: Compactness of the flow in $H^1$.} Let $(t_n)\subset\R$ be a sequence of times such that $t_n\uparrow T$ as $n\to+\infty$.
Then we set
\[
u_n=u(t_n),\quad \lambda_n = \frac{\nld{\nabla u_n}}{\nld{\nabla \psi}},\quad
v_n(x) = \lambda_n^{-N/2}u_n(\lambda_n^{-1}x).
\]
First note that $\lambda_n\to +\infty$ as $n\to+\infty$, and $\nld{v_n}=\nld{u_n}=\nld{u_0}=\nld{\psi}$ from the $L^2$ scaling.
With the change of variables $y=\lambda_n^{-1}x$, we find
\[
\nld{\nabla v_n}^2 = \lambda_n^{-2}\int |\nabla u_n(y)|^2\diff y = \lambda_n^{-2}\nld{\nabla u_n}^2 = \nld{\nabla \psi}^2
\]
and, since $p=1+\frac{4-2b}{N}$,
\begin{align*}
E(v_n) &= \frac{\lambda_n^{-2}}{2} \int |\nabla u_n(y)|^2\diff y
-\frac{1}{p+1}\left(\lambda_n^{N-b-\frac{N(p+1)}{2}}\right)\int |y|^{-b} |u_n(y)|^{p+1}\diff y \\
&= \frac{E(u_n)}{\lambda_n^2} = \frac{E(u_0)}{\lambda_n^2} \xrightarrow[n\to+\infty]{} 0.
\end{align*}
Hence, we can apply Proposition~\ref{compact.prop} to $(v_n)$, which gives $\gamma_2\in\R$ such that,
up to extracting a subsequence of $(v_n)$, we have
\begin{equation} \label{vn}
\lim_{n\to +\infty} \nhu{v_n -e^{i\gamma_2}\psi} =0.
\end{equation}

\medskip
\noindent {\em Step 2: Mass concentration.} We can now prove that $u_n$
concentrates all of its mass at $x=0$ as $n\to+\infty$.
More precisely we show, in the sense of distributions, that
\[
|u_n| \xrightarrow[n\to+\infty]{} \nld{\psi}\delta_0.
\]
Indeed, for $\ffi\in C_0^{\infty}(\rn)$, using again the change of variables $y=\lambda_n^{-1}x$, we find
\begin{align*}
\intrn |u_n(y)|^2\ffi(y)\diff y &= \intrn |v_n(x)|^2\ffi(\lambda_n^{-1}x)\diff x \\
&= \intrn (|v_n(x)|^2-|\psi(x)|^2)\ffi(\lambda_n^{-1}x)\diff x +\intrn |\psi(x)|^2\ffi(0)\diff x \\
&\qquad +\intrn |\psi(x)|^2 [\ffi(\lambda_n^{-1}x)-\ffi(0)]\diff x.
\end{align*}
Hence, we have
\begin{multline*}
\left| \intrn |u_n(y)|^2\ffi(y)\diff y - \nld{\psi}^2\ffi(0) \right| \le \|\ffi\|_{L^{\infty}}\intrn \left||v_n(x)|^2-|\psi(x)|^2\right| \diff x \\
+ \intrn |\psi(x)|^2|\ffi(\lambda_n^{-1}x)-\ffi(0)|\diff x.
\end{multline*}
We conclude this step by noticing that $|v_n|^2$ converges to $|\psi|^2$ strongly in $L^1(\rn)$ from~\eqref{vn},
so the first integral converges to $0$ as $n\to +\infty$. Since $\lambda_n\to +\infty$, the second integral also
converges to $0$ as $n\to +\infty$ by the dominated convergence theorem, and so
\begin{equation} \label{mass.conc}
\intrn |u_n(y)|^2\ffi(y)\diff y \xrightarrow[n\to+\infty]{} \nld{\psi}^2\ffi(0).
\end{equation}

\medskip
\noindent {\em Step 3: $u(t)\in\Sigma$ for all $t\in [0,T)$.} Let $\phi\in C_0^{\infty}(\rn)$ be radial and non-negative such that $\phi(x)=|x|^2$ for $|x|\le 1$.
In other words, there exists $f\in C_0^{\infty}(\R,\R_+)$ such that $\phi(x)=f(|x|)$ and $f(r)=r^2$ for $-1\le r\le 1$.

Since $f\ge 0$, there exists $C>0$ such that $|f'(r)|^2\le Cf(r)$ for all $r\in\R$, and so
\[
|\nabla\phi(x)|^2\leq C\phi(x)
\]
for all $x\in\rn$.
Indeed, by Taylor's formula, for all $r\in\R$ and $h\in\R$, there exists $y$ between $r$ and $r+h$ such that
\[
0\le f(r+h) = f(r)+f'(r)h+\frac{f''(y)}{2}h^2 \le f(r) +f'(r)h +C'h^2,
\]
where $C'=1+\max_{r\in\R} \frac{|f''(r)|}{2}>0$. Hence the right-hand side is a non-negative quadratic polynomial in $h$,
so we must have $|f'(r)|^2-4C'f(r)\le 0$, that is, $|f'(r)|^2\le Cf(r)$ with $C=4C'>0$.

Now, for $R>0$, we define $\phi_R(x)=R^2\phi\left(\frac{x}{R}\right)$ and, for all $t\in [0,T)$,
\[
\Gamma_R(t) = \intrn \phi_R(x)|u(t,x)|^2\diff x.
\]
Note that we now have $\phi_R(x)=|x|^2$ for $|x|\le R$, and still $\phi_R\in C_0^{\infty}(\rn)$ and $|\nabla \phi_R|^2 \le C\phi_R$.
Moreover, as for the proof of~\eqref{virial1}, we find
\begin{align*}
\Gamma'_R(t) &= 2\re\int \phi_R \bar u \partial_t u = -2\im\int \phi_R \bar u(\D u +|x|^{-b}|u|^{p-1}u) \\
&= 2\im\int \nabla u\cdot(\nabla\phi_R\bar u+\phi_R\nabla\bar u) = 2\int \nabla\phi_R\cdot \im(\bar u\nabla u).
\end{align*}
Since $\nld{u}=\nld{\psi}$, we may apply Lemma~\ref{Banica.lem} and we get, since $|\nabla \phi_R|^2 \le C\phi_R$,
\[
|\Gamma'_R(t)| \le 2\sqrt{2E(u)} \left(\int |\nabla\phi_R|^2|u|^2\right)^{1/2} \le C\sqrt{E(u_0)}\sqrt{\Gamma_R(t)}.
\]
Integrating between a fixed $t\in [0,T)$ and $t_n$, we obtain
\[
|\sqrt{\Gamma_R(t)}-\sqrt{\Gamma_R(t_n)}| \le C|t-t_n|.
\]
But from the mass concentration result~\eqref{mass.conc} in Step~2, we get
\[
\Gamma_R(t_n) = \intrn |u_n(x)|^2\phi_R(x)\diff x \xrightarrow[n\to+\infty]{} \nld{\psi}^2\phi_R(0)=0.
\]
Thus, letting $n\to+\infty$ in the last inequality, we obtain, for all $t\in [0,T)$ and all $R>0$,
\[
\Gamma_R(t) \leq C(T-t)^2.
\]
Since the right-hand side of the last expression is independent of $R$, we obtain, by letting $R\to+\infty$,
for all $t\in [0,T)$,
\[
u(t)\in\Sigma \quad\text{and}\quad 0\le \Gamma(t) \le C(T-t)^2,
\]
where $\Gamma$ is defined by~\eqref{Gamma.def}.
From this estimate, we can extend by continuity $\Gamma(t)$ at $t=T$ by setting $\Gamma(T)=0$,
from which we also obtain $\Gamma'(T)=0$.
Moreover, since $u(t)\in\Sigma$ and $u$ is a solution of~\eqref{inls}, we may apply Lemma~\ref{virial.lem},
and by~\eqref{GammaSeconde} we obtain $\Gamma''(t)=16E(u_0)$,
which finally gives, for all $t\in [0,T)$,
\[
\Gamma(t) = 8E(u_0)(T-t)^2.
\]
Letting $t=0$, we find, using identity~\eqref{virial1},
\[
\Gamma(0)=\int |x|^2|u_0|^2= 8E(u_0)T^2\quad \text{and}\quad \Gamma'(0)= 4\int x\cdot\im(\overline{u_0}\nabla u_0) = -16E(u_0)T.
\]

\medskip
\noindent {\em Step 4: Determination of $u_0$ and conclusion.}
We finally apply identity~\eqref{Banica} to $u_0$ and $s=\frac{1}{2T}$,
with $\theta(x)=\frac{|x|^2}{2}$. Since $\nabla\theta(x)=x$, we obtain
\begin{align*}
E(u_0e^{i\frac{|x|^2}{4T}}) &= E(u_0) +\frac{1}{2T}\int x\cdot\im(\overline{u}_0\nabla u_0) +\frac{1}{8T^2}\int |x|^2|u_0|^2 \\
&= E(u_0) +\frac{1}{2T}(-4E(u_0)T) +\frac{1}{8T^2}(8E(u_0)T^2) \\
&= E(u_0) -2E(u_0) +E(u_0) = 0.
\end{align*}
(Note that this calculation justifies, {\em a posteriori}, the application of~\eqref{Banica}
with the function $\theta(x)=\frac{|x|^2}{2}\not\in C_0^{\infty}(\rn)$.)
Hence, we have $\nld{u_0e^{i\frac{|x|^2}{4T}}}=\nld{\psi}$ and $E(u_0e^{i\frac{|x|^2}{4T}})=0$,
and we deduce
from Proposition~\ref{charact.prop} that there exist $\lambda_1>0$ and $\gamma_1\in\R$ such that
\[
u_0(x) = e^{i\gamma_1} e^{-i\frac{|x|^2}{4T}} \lambda_1^{N/2} \psi(\lambda_1x).
\]
Now the end of the proof entirely relies on the pseudo-conformal transformation,
as stated in Section~\ref{pseudo-conf.sec}. Indeed, if we define $\lambda_0=\lambda_1T>0$ and
$\gamma_0 = \gamma_1-\lambda_1^2T = \gamma_1-\frac{\lambda_0^2}{T} \in\R$, we can rewrite $u_0$ as
\[
u_0(x) = e^{i\gamma_0} e^{i\frac{\lambda_0^2}{T}} e^{-i\frac{|x|^2}{4T}} \left(\frac{\lambda_0}{T}\right)^{N/2}
\psi\left( \frac{\lambda_0x}{T}\right),
\]
so that $u_0=S_{T,\lambda_0,\gamma_0}(0)$, where $S_{T,\lambda_0,\gamma_0}$ is defined by~\eqref{S2.def}.
It then follows
by uniqueness of the solution of~\eqref{inls} that $u(t)=S_{T,\lambda_0,\gamma_0}(t)$
for all $t\in [0,T)$, which concludes the proof of Theorem~\ref{main.thm}.\hfill$\Box$


\end{document}